\providecommand{\U}[1]{\protect\rule{.1in}{.1in}}
\newtheorem{theorem}{Theorem}
\theoremstyle{plain}
\newtheorem{corollary}{Corollary}
\newtheorem{definition}{Definition}
\newtheorem{example}{Example}
\newtheorem{remark}{Remark}
\numberwithin{equation}{section}
\begin{document}
\title[Sherman's inequality and its converse for strongly convex functions]{Sherman's inequality and its converse for strongly convex functions with
applications to generalized $f$-divergences}
\author{Slavica Iveli\'{c} Bradanovi\'{c}$^{1}$}
\address{1 -Faculty of Civil Engineering, Architecture And Geodesy, University of
Split, Matice Hrvatske 15, 21000 Split, Croatia}
\date{}
\maketitle

\begin{abstract}
Considering the weighted concept of majorization, Sherman obtained
generalization of majorization inequality for convex functions known as
Sherman's inequality. We extend Sherman's result to the class of $n$-strongly
convex functions using extended idea of convexity to the class of strongly
convex functions. We also obtaine upper bound for Sherman's inequality, so
called the converse Sherman inequality, and as easy consequences we get
Jensen's as well as majorization inequality and their conversions for strongly
convex functions. Obtained results are stronger versions for analogous results
for convex functions. As applications, we introduced a generalized concept of
$f$-divergence and derived some reverse relations for such concept.

\end{abstract}

\section{Introduction}

A function $f:[\alpha,\beta]\rightarrow\mathbb{R}$ is called strongly convex
with modulus $c>0$ if
\begin{equation}
f(\lambda x+(1-\lambda)y)\leq\lambda f(x)+(1-\lambda)f(y)-c\lambda
(1-\lambda)(x-y)^{2} \label{Strong_c}%
\end{equation}
for all $x,y\in\lbrack\alpha,\beta]$ and $\lambda\in\lbrack0,1].$

The concept of strongly convexity has been introduced by Polyak \cite{P}. It
has a large number of appearance in many different fields of applications,
particular in many branches of mathematics as well as optimization theory,
mathematical economics and approximation theory. Strongly convex functions
have many nice properties (see \cite{N}).

A function $f$ that satisfies (\ref{Strong_c}) with $c=0,$ i.e.
\begin{equation}
f(\lambda x+(1-\lambda)y)\leq\lambda f(x)+(1-\lambda)f(y) \label{Conv}%
\end{equation}
is convex in usual sense. Specially, if the inequality in (\ref{Conv}) is
strict, then $f$ is called strictly convex.

It is well known that following implications hold:
\[
\text{strongly convex \ \ }\Rightarrow\text{\ \ \ strictly convex
\ \ }\Rightarrow\text{\ \ \ convex.}%
\]
But the reverse implications are not true, in general.

\begin{example}
The function $f(x)=x^{2}$ is strongly convex and also strictly convex and
convex. The gunction $g(x)=e^{x}$ is strictly convex and convex but not
strongly convex. The function $h(x)=x$ is convex but not strictly neither
strongly convex.
\end{example}

In the theory of convex functions, natural generalization are convex functions
of higher order, i.e. $n$-convex functions. The notion of $n$-convexity was
defined in terms of divided differences by T. Popoviciu \cite{POP}\ which we
introduce in the sequel.

A function $f:[\alpha,\beta]\rightarrow\mathbb{R}$ is said to be $n$-convex if
for every choice of $n+1$ distinct points $z_{0},...,z_{n}\in\lbrack
\alpha,\beta],$ the $n$th order divided difference is nonnegative, i.e.%
\begin{equation}
\lbrack z_{0},z_{1},...,z_{n};f]\geq0, \label{n_con}%
\end{equation}
where divided difference may be formally defined by%
\begin{align*}
\left[  z_{i};f\right]   &  =f(z_{i}),\text{ \ \ }i=0,...,n\\
\lbrack z_{0},...,z_{n};f]  &  =\frac{[z_{1},...,z_{n};f]-[z_{0}%
,...,z_{n-1};f]}{z_{n}-z_{0}}\text{ .}%
\end{align*}
The value $[z_{0},...,z_{n};f]$ is independent of the order of the points
$z_{0},...,z_{n}.$ This definition may be extended to include the case in
which some or all the points coincide. Assuming that $f^{(j-1)}(z)$ exists, we
define%
\begin{equation}
\lbrack\underset{j\text{-times}}{\underbrace{z,...,z}};f]=\frac{f^{(j-1)}%
(z)}{(j-1)!}. \label{dd4}%
\end{equation}

\begin{remark}
It is known that $1$-convex function is increasing function and $2$-convex
function is just ordinary convex function, i.e. convex in usual sense.
\newline If $f^{(n)}$ exists, then $f$ is $n$-convex iff $f^{(n)}\geq0.$
\newline Also, if $f$ is $n$-convex for $n\geq2$, then $f^{(k)}$ exists and
$f$ is $(n-k)$-convex for $1\leq k\leq n-2.$ For more information see
\cite{PPT}.
\end{remark}

Following R. Gera and K. Nikodem \cite{GN}, we say that a function
$f:[\alpha,\beta]\rightarrow\mathbb{R}$ is strongly convex of order $n$ with
modulus $c>0$ (or $n$-strongly convex with modulus $c>0$)\ if
\begin{equation}
\lbrack z_{0},...,z_{n};f]\geq c \label{n_str_con}%
\end{equation}
for all $z_{0},...,z_{n}\in\lbrack\alpha,\beta].$

\begin{remark}
Note that $2$-strongly convex function with modulus $c$ is just strongly
convex function with modulus $c$ as given by (\ref{Strong_c}). \newline For
$n=2,$ the condition (\ref{n_str_con}) is equivalent to%
\[
\frac{f(z_{0})}{(z_{0}-z_{1})(z_{0}-z_{2})}+\frac{f(z_{1})}{(z_{1}%
-z_{2})(z_{1}-z_{0})}+\frac{f(z_{2})}{(z_{2}-z_{1})(z_{2}-z_{0})}\geq c
\]
or%
\[
f(z_{1})\leq\frac{z_{2}-z_{1}}{z_{2}-z_{0}}f(z_{0})+\frac{z_{1}-z_{0}}%
{z_{2}-z_{0}}f(z_{2})-c(z_{2}-z_{1})(z_{1}-z_{0}).
\]
A function $f:[\alpha,\beta]\rightarrow\mathbb{R}$ is a strongly $n$-convex
with modulus $c$ iff the function $g(x)=f(x)-cx^{n}$ is $n$-convex.\newline A
function $f:[\alpha,\beta]\rightarrow\mathbb{R}$ is a strongly $n$-convex with
modulus $c$ iff $f^{(n)}\geq cn!.$\newline For more information see \cite{GN},
\cite{N}, \cite{NP}.
\end{remark}

The concept of strongly convexity is a strengthening of the concept of
convexity and some properties of strongly convex functions are just stronger
versions of analogous properties of convex functions.

For $f:[\alpha,\beta]\rightarrow\mathbb{R}$ strongly convex function with
modulus $c>0,$ Jensen's inequality%
\begin{equation}
f\left(
{\displaystyle\sum\limits_{i=1}^{m}}
a_{i}x_{i}\right)  \leq%
{\displaystyle\sum\limits_{i=1}^{m}}
a_{i}f(x_{i})-c%
{\displaystyle\sum\limits_{i=1}^{m}}
a_{i}(x_{i}-\bar{x})^{2} \label{Jen_strong}%
\end{equation}
holds, where $\mathbf{x}=(x_{1},...,x_{m})\in\lbrack\alpha,\beta]^{m},$
$\mathbf{a}=(a_{1},...,a_{m})\in\lbrack0,\infty)^{m}$ with $\sum
\nolimits_{i=1}^{m}a_{i}=1$ and $\bar{x}=\sum\nolimits_{i=1}^{m}a_{i}x_{i}$
(see \cite{MN})$.$ On the other side, Jensen's inequality for a classical
convex function $f$ has the form%
\begin{equation}
f\left(
{\displaystyle\sum\limits_{i=1}^{m}}
a_{i}x_{i}\right)  \leq%
{\displaystyle\sum\limits_{i=1}^{m}}
a_{i}f(x_{i}). \label{Jen_conv}%
\end{equation}
If we compare (\ref{Jen_strong}) with (\ref{Jen_conv}), note that the
inequality (\ref{Jen_strong}) includes a better upper bound for $f\left(
{\textstyle\sum\nolimits_{i=1}^{m}}
a_{i}x_{i}\right)  $ since $c%
{\textstyle\sum\nolimits_{i=1}^{m}}
a_{i}(x_{i}-\bar{x})\geq0.$ Since specially for $c=0$ the strogly convexity
reduces to the ordinary convexity, then (\ref{Jen_strong}) becomes
(\ref{Jen_conv}).

Closely connected to Jensen's inequality (\ref{Jen_conv}) is the
Lah-Ribari\v{c} inequality%
\begin{equation}
\sum_{i=1}^{m}a_{i}f\left(  x_{i}\right)  \leq\frac{\beta-\bar{x}}%
{\beta-\alpha}f(\alpha)+\frac{\bar{x}-\alpha}{\beta-\alpha}f(\beta)
\label{ineq_LahRib}%
\end{equation}
which holds for every convex function $f:[\alpha,\beta]\rightarrow\mathbb{R}$
and $\mathbf{x}=(x_{1},...,x_{m})\in\lbrack\alpha,\beta]^{m},$ $\mathbf{a}%
=(a_{1},...,a_{n})\in\left[  0,\infty\right)  ^{m}$ with $\sum_{i=1}^{m}%
a_{i}=1$ and $\bar{x}=%
{\textstyle\sum\nolimits_{i=1}^{m}}
a_{i}x_{i}$ (see \cite{LR}). The Lah-Ribari\v{c} inequality gives the upper
bound for the term $%
{\textstyle\sum\nolimits_{i=1}^{m}}
a_{i}f(x_{i})$ and often called the converse Jensen inequality.

\section{Preliminaries}

For two vectors $\mathbf{x}=(x_{1},...,x_{m}),\mathbf{y}=(y_{1},...,y_{m}%
)\in\lbrack\alpha,\beta]^{m},$ let $x_{[i]},y_{[i]}$ denote their increasing
order. We say that $\mathbf{x}$ majorizes $\mathbf{y}$ or $\mathbf{y}$ is
majorized by $\mathbf{x}$ and write%
\[
\mathbf{y}\prec\mathbf{x}%
\]
if%
\begin{equation}%
{\displaystyle\sum\limits_{i=1}^{k}}
y_{[i]}\leq%
{\displaystyle\sum\limits_{i=1}^{k}}
x_{[i]},\text{ \ \ }k=1,....,m, \label{maj}%
\end{equation}
with equality in (\ref{maj})\ for $k=m.$

The term majorization is introduced in the space $\mathbb{R}^{m}$, in which
the order is not defined, to compare and detect potential links between
vectors. The majorization relation is reflexive and transitive but it is not
antisymmetric (see \cite[p.~79]{MO}) and hence is a preordering not a partial
ordering. The majorization preorder on vectors is known as vector majorization
or classical majorization. This classical concept was initially studied by
Hardy et al. \cite{HLP}. A superb reference on the subject is \cite{MO}.

It is well known that
\[
\mathbf{y\prec x}\text{ \ \ iff \ \ }\mathbf{y}=\mathbf{xA}%
\]
for some doubly stochastic matrix $\mathbf{A}=(a_{ij})\in\mathcal{M}%
_{mm}(\mathbb{R})$, i.e. a matrix with nonnegative entries and rows and
columns sums equal to $1$.

Moreover, $\mathbf{y\prec x}$ implies
\[%
{\displaystyle\sum\limits_{i=1}^{m}}
f(y_{i})\leq%
{\displaystyle\sum\limits_{i=1}^{m}}
f(x_{i})
\]
for every continuous convex function $f:[\alpha,\beta]\rightarrow\mathbb{R}.$
This result, obtained by Hardy et al. \cite{HLP}, is well known as
majorization inequality and plays an important tool in the study of
majorization theory.

S. Sherman \cite{S} considered the weighted concept of majorization between
two vectors $\mathbf{x}=(x_{1},...,x_{m})\in\lbrack\alpha,\beta]^{m}$ and
$\mathbf{y}=(y_{1},...,y_{l})\in\lbrack\alpha,\beta]^{l}$ with nonnegative
weights $\mathbf{a}=(a_{1},...,a_{m})$ and $\mathbf{b}=(b_{1},...,b_{l})$. The
concept of weighted majorization is defined by assumption of existence of row
stochastic matrix $\mathbf{A}=(a_{ij})\in\mathcal{M}_{lm}(\mathbb{R}),$ i.e.
matrix with nonnegative entries and rows sums equal to $1$, such that
\begin{align}
a_{j}  &  =%
{\displaystyle\sum\limits_{i=1}^{m}}
b_{j}a_{ij},\text{ \ \ }j=1,...,l,\label{sh_assump_m}\\
y_{i}  &  =%
{\displaystyle\sum\limits_{j=1}^{l}}
x_{j}a_{ij},\text{ \ \ }i=1,...,m.\text{ }\nonumber
\end{align}
Sherman proved that under conditions (\ref{sh_assump_m}), the inequality
\begin{equation}%
{\displaystyle\sum\limits_{i=1}^{m}}
b_{i}f(y_{i})\leq%
{\displaystyle\sum\limits_{j=1}^{l}}
a_{j}f(x_{j}) \label{Sh_ineq}%
\end{equation}
holds for every convex function $f:[\alpha,\beta]\rightarrow\mathbb{R}$.

We can write the conditions (\ref{sh_assump_m}) in the matrix form%
\begin{equation}
\mathbf{a}=\mathbf{bA}\text{ \ \ and \ \ }\mathbf{y}=\mathbf{xA}^{T}\mathbf{,}
\label{sh_assump}%
\end{equation}
where $\mathbf{A}^{T}$ denotes transpose matrix.

In the sequel, we write%
\[
(\mathbf{y,b})\prec(\mathbf{x,a})
\]
and say that a pair $(\mathbf{y,b})$ is weighted majorized by $(\mathbf{x,a})$
if vectors $\mathbf{x,y}$ and corresponding weights $\mathbf{a,b}$ are such
that satisfy conditions (\ref{sh_assump_m}) for some row stochastic matrix
$\mathbf{A}.$

Sherman's generalization contains Jensen's as well as Majorization's
inequality as special cases as we pointed in the next remark.

\begin{remark}
a) For $m=1$ and $\mathbf{b}=[1],$ Sherman's inequality (\ref{Sh_ineq})
reduces to Jensen's inequality (\pageref{Jen_conv}). \newline b) For $m=l$ and
$\mathbf{b}=\mathbf{e}=(1,...,1),$ because $\mathbf{y}\prec\mathbf{x}$ gives
$\mathbf{y}=\mathbf{xA}^{T}$ with some doubly stochastic matrix $\mathbf{A}$
and $\mathbf{a}=\mathbf{bA=e}$, from Sherman's inequality (\ref{Sh_ineq}) we
get majorization inequality%
\begin{equation}%
{\displaystyle\sum\limits_{i=1}^{m}}
f(y_{i})\leqslant%
{\displaystyle\sum\limits_{i=1}^{m}}
f(x_{i}). \label{Maj1}%
\end{equation}
c)\ When $m=l,$ and all weights $b_{i}$ and $a_{j}$ are equal, the condition
$\mathbf{a}=\mathbf{bA}$ assures the stochastically on columns, so in that
case we deal with doubly stochastic matrices. Moreover, Sherman's inequality
(\ref{Sh_ineq}) reduces to
\begin{equation}%
{\displaystyle\sum\limits_{i=1}^{m}}
a_{i}f(y_{i})\leqslant%
{\displaystyle\sum\limits_{i=1}^{m}}
a_{i}f(x_{i}), \label{Maj2}%
\end{equation}
known as Fuchs' inequality (see \cite{F}).
\end{remark}

In recent times, Sherman's result has attracted the interest of several
mathematicians (see \cite{AKIBP1}-\cite{Bu}, \cite{IBNP}-\cite{IBP},
\cite{NI}-\cite{MN3}).

This paper is organized as follows. In Section 3 we obtain the Lah-Ribarich
inequality for strongly convex functions. We deal with Sherman's inequality
and its converse for strongly convex function. As easy consequences, we get
Jensen's and majorization inequalities and their conversions for strongly
convex functions. In Section 4, we obtain some inequalities for generalized
concept of $f$-divergence. In the last section, we extend Sherman's result to
the class of strongly convex functions of higher order.

\section{Sherman's type inequalities and conversions}

We start with the Lah-Ribarich inequality for strongly convex functions.

\begin{theorem}
Let $\mathbf{x}=(x_{1},...,x_{l})\in\lbrack\alpha,\beta]^{l}$ and
$\mathbf{a}=(a_{1},...,a_{l})\in\lbrack0,\infty)^{l}$ with $\sum
\nolimits_{j=1}^{l}a_{j}=1$ and $\bar{x}=\sum\nolimits_{j=1}^{l}a_{j}x_{j}.$
If $f:[\alpha,\beta]\rightarrow\mathbb{R}$ is strongly convex with modulus
$c>0,$ then%
\begin{equation}
\sum_{j=1}^{l}a_{j}f(x_{j})\leq\frac{\beta-\bar{x}}{\beta-\alpha}%
f(\alpha)+\frac{\bar{x}-\alpha}{\beta-\alpha}f(\beta)-c%
{\displaystyle\sum\limits_{j=1}^{l}}
a_{j}(\beta-x_{j})(x_{j}-\alpha). \label{LR_str}%
\end{equation}

\begin{proof}
Since for strongly convex function we have
\[
f(z_{1})\leq\frac{z_{2}-z_{1}}{z_{2}-z_{0}}f(z_{0})+\frac{z_{1}-z_{0}}%
{z_{2}-z_{0}}f(z_{2})-c(z_{2}-z_{1})(z_{1}-z_{0}),
\]
by substituting $z_{1}=x_{j},$ $z_{2}=\beta$ and $z_{1}=\alpha,$ we get%
\[
f(x_{j})\leq\frac{\beta-x_{j}}{\beta-\alpha}f(\alpha)+\frac{x_{j}-\alpha
}{\beta-\alpha}f(\beta)-c(\beta-x_{j})(x_{j}-\alpha).
\]
Now, multiplying with $a_{j}$ and summing over $j$ we have%
\[
\sum_{j=1}^{l}a_{j}f(x_{j})\leq\frac{\beta-%
{\displaystyle\sum\limits_{j=1}^{l}}
a_{j}x_{j}}{\beta-\alpha}f(\alpha)+\frac{%
{\displaystyle\sum\limits_{j=1}^{l}}
a_{j}x_{j}-\alpha}{\beta-\alpha}f(\beta)-c%
{\displaystyle\sum\limits_{j=1}^{l}}
a_{j}(\beta-x_{j})(x_{j}-\alpha)
\]
what we need to prove.
\end{proof}
\end{theorem}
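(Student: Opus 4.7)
The plan is to reduce the claim to the three-point characterization of strong convexity already recorded in Remark~3, since that version is tailor-made for a Lah--Ribarič style argument. Recall that strong convexity with modulus $c$ is equivalent to
\[
f(z_{1})\leq\frac{z_{2}-z_{1}}{z_{2}-z_{0}}f(z_{0})+\frac{z_{1}-z_{0}}{z_{2}-z_{0}}f(z_{2})-c(z_{2}-z_{1})(z_{1}-z_{0})
\]
for every $z_{0}<z_{1}<z_{2}$ in $[\alpha,\beta]$, which can be read as a pointwise bound on $f$ at an interior point by its values at two chosen surrounding points, with an explicit quadratic correction.

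First I would fix $j$ and apply this pointwise inequality with the outermost choice $z_{0}=\alpha$, $z_{1}=x_{j}$, $z_{2}=\beta$, which is valid because $x_{j}\in[\alpha,\beta]$. This gives
\[
f(x_{j})\leq\frac{\beta-x_{j}}{\beta-\alpha}f(\alpha)+\frac{x_{j}-\alpha}{\beta-\alpha}f(\beta)-c(\beta-x_{j})(x_{j}-\alpha).
\]
Then I would multiply through by $a_{j}\geq0$ and sum over $j=1,\dots,l$, using only that $\sum_{j}a_{j}=1$ and $\sum_{j}a_{j}x_{j}=\bar{x}$ to collapse the linear terms into $(\beta-\bar{x})f(\alpha)/(\beta-\alpha)+(\bar{x}-\alpha)f(\beta)/(\beta-\alpha)$. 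The quadratic correction $-c(\beta-x_{j})(x_{j}-\alpha)$ is already in final form and simply gets weighted and summed, producing the term $-c\sum_{j}a_{j}(\beta-x_{j})(x_{j}-\alpha)$ on the right-hand side.

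An alternative (essentially equivalent) route would start from the fact, also noted in Remark~3, that $g(x)=f(x)-cx^{2}$ is ordinary convex on $[\alpha,\beta]$. Applying the classical Lah--Ribarič inequality~(\ref{ineq_LahRib}) to $g$ and then expanding the identity
\[
-\frac{(\beta-\bar{x})\alpha^{2}+(\bar{x}-\alpha)\beta^{2}}{\beta-\alpha}+\sum_{j=1}^{l}a_{j}x_{j}^{2}=\sum_{j=1}^{l}a_{j}(x_{j}-\alpha)(x_{j}-\beta)
\]
produces exactly the corrective term $-c\sum_{j}a_{j}(\beta-x_{j})(x_{j}-\alpha)$. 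I would mention this alternative because it makes transparent why the correction takes precisely this form.

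There is no real obstacle: the result is pure bookkeeping once the three-point inequality is in hand. The only thing to be slightly careful about is the sign of the correction---one must note that $(x_{j}-\alpha)(x_{j}-\beta)=-(\beta-x_{j})(x_{j}-\alpha)\leq0$, which is what makes the extra term genuinely \emph{strengthen} the classical Lah--Ribarič bound rather than weaken it, in keeping with the philosophy that inequalities for strongly convex functions refine their convex counterparts.
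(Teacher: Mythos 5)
Your primary argument is exactly the paper's proof: apply the three-point characterization of strong convexity with $z_{0}=\alpha$, $z_{1}=x_{j}$, $z_{2}=\beta$, then multiply by $a_{j}$ and sum using $\sum_{j}a_{j}=1$. The proposal is correct, and the alternative route via $g(x)=f(x)-cx^{2}$ is a valid (and correctly computed) variant, but the main line of reasoning coincides with the paper's.
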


Now we give Sherman's inequality for strongly convex functions.

\begin{theorem}
\label{Th_Sherman_strong}Let $\mathbf{x}=(x_{1},...,x_{l})\in\lbrack
\alpha,\beta]^{l},$ $\mathbf{y}=(y_{1},...,y_{m})\in\lbrack\alpha,\beta]^{m},$
$\mathbf{a}=(a_{1},...,a_{l})\in\lbrack0,\infty)^{l}$ and $\mathbf{b}%
=(b_{1},...,b_{m})\in\lbrack0,\infty)^{m}$ be such that $(\mathbf{y,b}%
)\prec(\mathbf{x,a}).$ Then for every $f:[\alpha,\beta]\rightarrow\mathbb{R}$
strongly convex with modulus $c>0,$ we have%
\begin{equation}%
{\displaystyle\sum\limits_{i=1}^{m}}
b_{i}f(y_{i})\leq%
{\displaystyle\sum\limits_{j=1}^{l}}
a_{j}f(x_{j})-c\left(
{\displaystyle\sum\limits_{j=1}^{l}}
a_{j}x_{j}^{2}-%
{\displaystyle\sum\limits_{i=1}^{m}}
b_{i}y_{i}^{2}\right)  . \label{Sh_str}%
\end{equation}

\begin{proof}
Using (\ref{sh_assump_m}) and applying (\ref{Jen_strong}), we have
\begin{align}%
{\displaystyle\sum\limits_{i=1}^{m}}
b_{i}f(y_{i})  &  =%
{\displaystyle\sum\limits_{i=1}^{m}}
b_{i}f\left(
{\displaystyle\sum\limits_{j=1}^{l}}
x_{j}a_{ij}\right) \label{Sh_1}\\
&  \leq%
{\displaystyle\sum\limits_{i=1}^{m}}
b_{i}\left(
{\displaystyle\sum\limits_{j=1}^{l}}
a_{ij}f(x_{j})-c%
{\displaystyle\sum\limits_{j=1}^{l}}
a_{ij}(x_{j}-y_{i})^{2}\right) \nonumber\\
&  =%
{\displaystyle\sum\limits_{j=1}^{l}}
a_{j}f(x_{j})-c%
{\displaystyle\sum\limits_{i=1}^{m}}
b_{i}%
{\displaystyle\sum\limits_{j=1}^{l}}
a_{ij}(x_{j}-y_{i})^{2}.\nonumber
\end{align}
By an easy calculation, we get
\begin{align}
&
{\displaystyle\sum\limits_{j=1}^{l}}
a_{j}f(x_{j})-c%
{\displaystyle\sum\limits_{i=1}^{m}}
b_{i}%
{\displaystyle\sum\limits_{j=1}^{l}}
a_{ij}(x_{j}-y_{i})^{2}\label{Sh_2}\\
&  =%
{\displaystyle\sum\limits_{j=1}^{l}}
a_{j}f(x_{j})-c%
{\displaystyle\sum\limits_{i=1}^{m}}
b_{i}%
{\displaystyle\sum\limits_{j=1}^{l}}
a_{ij}(x_{j}^{2}-2x_{j}y_{i}+y_{i}^{2})\nonumber\\
&  =%
{\displaystyle\sum\limits_{j=1}^{l}}
a_{j}f(x_{j})-c\left(
{\displaystyle\sum\limits_{j=1}^{l}}
a_{j}x_{j}^{2}-%
{\displaystyle\sum\limits_{i=1}^{m}}
b_{i}y_{i}^{2}\right)  .\nonumber
\end{align}
Now, combining (\ref{Sh_1}) and (\ref{Sh_2}), we get (\ref{Sh_str}).
\end{proof}
\end{theorem}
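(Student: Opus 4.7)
The plan is to leverage the strong Jensen inequality \eqref{Jen_strong} against each inner combination produced by the row-stochastic matrix $\mathbf{A}$ from the weighted majorization hypothesis, and then reorganize the resulting double sum using both parts of the condition $(\mathbf{y},\mathbf{b})\prec(\mathbf{x},\mathbf{a})$.

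First I would invoke the definition of weighted majorization: there exists a row-stochastic matrix $\mathbf{A}=(a_{ij})\in\mathcal{M}_{ml}(\mathbb{R})$ with $y_i=\sum_{j=1}^{l}x_j a_{ij}$ and $a_j=\sum_{i=1}^{m}b_i a_{ij}$. For each fixed $i$, the row $(a_{i1},\dots,a_{il})$ is a probability vector, so applying \eqref{Jen_strong} to $f$ at the points $x_1,\dots,x_l$ with weights $a_{i1},\dots,a_{il}$ yields
\[
f(y_i)\leq\sum_{j=1}^{l}a_{ij}f(x_j)-c\sum_{j=1}^{l}a_{ij}(x_j-y_i)^{2}.
\]
Multiplying by $b_i\geq 0$ and summing over $i$ produces the linear part $\sum_{i}b_i\sum_{j}a_{ij}f(x_j)$, which collapses to $\sum_{j}a_j f(x_j)$ by the column condition $a_j=\sum_i b_i a_{ij}$. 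This recovers the classical Sherman inequality as an upper estimate and leaves a subtracted correction term to be identified.

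The remaining work is to evaluate the quadratic residual
\[
R:=\sum_{i=1}^{m}b_i\sum_{j=1}^{l}a_{ij}(x_j-y_i)^{2}
\]
and show it equals $\sum_{j}a_j x_j^{2}-\sum_{i}b_i y_i^{2}$. I would expand $(x_j-y_i)^2=x_j^2-2x_jy_i+y_i^2$ and handle the three pieces separately: the $x_j^2$ piece becomes $\sum_j a_j x_j^2$ by the column condition; the cross term $-2\sum_i b_i y_i\sum_j a_{ij}x_j$ collapses to $-2\sum_i b_i y_i^2$ by the row condition $y_i=\sum_j a_{ij}x_j$; and the $y_i^2$ piece simplifies to $\sum_i b_i y_i^2$ because each row of $\mathbf{A}$ sums to $1$. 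Combining gives $R=\sum_{j}a_j x_j^{2}-\sum_{i}b_i y_i^{2}$, which is precisely the correction appearing in \eqref{Sh_str}.

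I do not foresee a genuine obstacle: the proof is a direct lift of the classical Sherman argument with the sharper Jensen bound plugged in, and everything hinges on the row-stochastic/column-weight duality already built into $(\mathbf{y},\mathbf{b})\prec(\mathbf{x},\mathbf{a})$. The only point requiring care is making sure that the three bookkeeping identities above are applied to the correct index, so that the cross term and the $y_i^2$ term combine (rather than cancel) to yield $-\sum_i b_i y_i^2$; this is the sole place where a sign or index slip could occur.
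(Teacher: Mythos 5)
Your proposal is correct and follows essentially the same route as the paper: apply the strong Jensen inequality \eqref{Jen_strong} row by row using the row-stochastic matrix from $(\mathbf{y},\mathbf{b})\prec(\mathbf{x},\mathbf{a})$, collapse the linear part via $a_j=\sum_i b_i a_{ij}$, and expand the quadratic residual to identify it as $\sum_j a_j x_j^2-\sum_i b_i y_i^2$. Your explicit three-term bookkeeping is exactly the ``easy calculation'' the paper performs in \eqref{Sh_2}, and the signs work out as you describe.
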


\begin{remark}
If we compare (\ref{Sh_str}) with (\ref{Sh_ineq}), note that the inequality
(\ref{Sh_str}) includes a better upper bound for $%
{\textstyle\sum\nolimits_{i=1}^{m}}
b_{i}f(y_{i})$ since $c\left(
{\textstyle\sum\nolimits_{j=1}^{l}}
a_{j}x_{j}^{2}-%
{\textstyle\sum\nolimits_{i=1}^{m}}
b_{i}y_{i}^{2}\right)  \geq0$ because $t\mapsto t^{2}$ is convex function and
then by Sherman's inequality we have $%
{\textstyle\sum\nolimits_{j=1}^{l}}
a_{j}x_{j}^{2}-%
{\textstyle\sum\nolimits_{i=1}^{m}}
b_{i}y_{i}^{2}\geq0.$ Moreover, we get the double inequality
\begin{align}%
{\displaystyle\sum\limits_{i=1}^{m}}
b_{i}f(y_{i})  &  \leq%
{\displaystyle\sum\limits_{j=1}^{l}}
a_{j}f(x_{j})-c\left(
{\displaystyle\sum\limits_{j=1}^{l}}
a_{j}x_{j}^{2}-%
{\displaystyle\sum\limits_{i=1}^{m}}
b_{i}y_{i}^{2}\right) \label{Sh_sr1}\\
&  \leq%
{\displaystyle\sum\limits_{j=1}^{l}}
a_{j}f(x_{j}).\nonumber
\end{align}
a) Specially, for $m=1$ and $\mathbf{b}=(1),$ (\ref{Sh_sr1}) becomes%
\begin{align*}
f\left(
{\displaystyle\sum\limits_{j=1}^{l}}
a_{j}x_{j}\right)   &  \leq%
{\displaystyle\sum\limits_{j=1}^{l}}
a_{j}f(x_{j})-c\left(
{\displaystyle\sum\limits_{j=1}^{l}}
a_{j}x_{j}^{2}-\bar{x}^{2}\right) \\
&  =%
{\displaystyle\sum\limits_{j=1}^{l}}
a_{j}f(x_{j})-c%
{\displaystyle\sum\limits_{j=1}^{l}}
a_{j}\left(  x_{j}-\bar{x}\right)  ^{2}\\
&  \leq%
{\displaystyle\sum\limits_{j=1}^{l}}
a_{j}f(x_{j}),
\end{align*}
where $\bar{x}=%
{\textstyle\sum\nolimits_{j=1}^{l}}
a_{j}x_{j},$ i.e. we get Jensen's inequality (\ref{Jen_strong}) for strongly
convex function.\newline b) For $m=l$ and $\mathbf{b}=\mathbf{e}=(1,...,1),$
(\ref{Sh_sr1}) becomes%
\begin{align*}%
{\displaystyle\sum\limits_{i=1}^{m}}
f(y_{i})  &  \leq%
{\displaystyle\sum\limits_{i=1}^{m}}
f(x_{i})-c\left(
{\displaystyle\sum\limits_{i=1}^{m}}
x_{i}^{2}-%
{\displaystyle\sum\limits_{i=1}^{m}}
y_{i}^{2}\right) \\
&  \leq%
{\displaystyle\sum\limits_{i=1}^{m}}
f(x_{i}),
\end{align*}
i.e. we get majorization inequality for strongly convex function.\newline c)
When $m=l,$ and all weights $b_{i}$ and $a_{j}$ are equal, then (\ref{Sh_str})
becomes%
\begin{align*}%
{\displaystyle\sum\limits_{i=1}^{m}}
a_{i}f(y_{i})  &  \leq%
{\displaystyle\sum\limits_{i=1}^{m}}
a_{i}f(x_{i})-c\left(
{\displaystyle\sum\limits_{i=1}^{m}}
a_{i}x_{i}^{2}-%
{\displaystyle\sum\limits_{i=1}^{m}}
a_{i}y_{i}^{2}\right) \\
&  \leq%
{\displaystyle\sum\limits_{i=1}^{m}}
a_{i}f(x_{i}),
\end{align*}
i.e. we get Fuchs' inequality for strongly convex function.
\end{remark}

Next we give conversion to Sherman's inequality for strongly convex functions.

\begin{theorem}
\label{Th_Con_Sherman_strong}Let $\mathbf{x}=(x_{1},...,x_{l})\in\lbrack
\alpha,\beta]^{l},$ $\mathbf{y}=(y_{1},...,y_{m})\in\lbrack\alpha,\beta]^{m},$
$\mathbf{a}=(a_{1},...,a_{l})\in\lbrack0,\infty)^{l}$ and $\mathbf{b}%
=(b_{1},...,b_{m})\in\lbrack0,\infty)^{m}$ be such that $(\mathbf{y,b}%
)\prec(\mathbf{x,a}).$ Let $B_{m}=\sum_{i=1}^{m}b_{i}.$ If $f:[\alpha
,\beta]\rightarrow\mathbb{R}$ is strongly convex with modulus $c>0,$ then%
\begin{align}
\sum_{j=1}^{l}a_{j}f\left(  x_{j}\right)   &  \leq\frac{B_{m}\beta-\sum
_{j=1}^{l}a_{j}x_{j}}{\beta-\alpha}f(\alpha)+\frac{\sum_{j=1}^{l}a_{j}%
x_{j}-B_{m}\alpha}{\beta-\alpha}f(\beta)\label{Conv_Sh}\\
&  -c%
{\displaystyle\sum\limits_{j=1}^{l}}
a_{j}(\beta-x_{j})(x_{j}-\alpha).\nonumber
\end{align}

\begin{proof}
Using (\ref{sh_assump_m}) we have%
\begin{equation}
\sum_{j=1}^{l}a_{j}f\left(  x_{j}\right)  =\sum_{j=1}^{l}\left(
{\displaystyle\sum\limits_{i=1}^{m}}
b_{j}a_{ij}\right)  f\left(  x_{j}\right)  =%
{\displaystyle\sum\limits_{i=1}^{m}}
b_{j}\left(  \sum_{j=1}^{l}a_{ij}f\left(  x_{j}\right)  \right)  .
\label{Conv1}%
\end{equation}
Applying (\ref{LR_str}) we get%
\begin{align}
\sum_{j=1}^{l}a_{ij}f\left(  x_{j}\right)   &  \leq\frac{\beta-\sum_{j=1}%
^{l}a_{ij}x_{j}}{\beta-\alpha}f(\alpha)+\frac{\sum_{j=1}^{l}a_{ij}x_{j}%
-\alpha}{\beta-\alpha}f(\beta)\label{Conv2}\\
&  -c%
{\displaystyle\sum\limits_{j=1}^{l}}
a_{ij}(\beta-x_{j})(x_{j}-\alpha).\nonumber
\end{align}
Now, combining (\ref{Conv1}) and (\ref{Conv2}), we get (\ref{Conv_Sh}).
\end{proof}
\end{theorem}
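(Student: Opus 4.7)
The plan is to reduce the bound to the Lah-Ribari\v{c} inequality for strongly convex functions (Theorem~\ref{LR_str}) applied row-by-row to the row-stochastic matrix $\mathbf{A}=(a_{ij})$ that witnesses the relation $(\mathbf{y,b})\prec(\mathbf{x,a})$. The essential point is that for each fixed $i$, the row $(a_{i1},\ldots,a_{il})$ consists of nonnegative entries summing to $1$, so it forms a legitimate probability weight on $\mathbf{x}\in[\alpha,\beta]^l$, which is exactly what the Lah-Ribari\v{c} setup requires.

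First I would use the column-type identity $a_j=\sum_{i=1}^{m}b_i a_{ij}$ (one of the defining conditions of weighted majorization) to rewrite the left-hand side and swap the order of summation:
\[
\sum_{j=1}^{l}a_j f(x_j)=\sum_{j=1}^{l}\Bigl(\sum_{i=1}^{m}b_i a_{ij}\Bigr)f(x_j)=\sum_{i=1}^{m}b_i\sum_{j=1}^{l}a_{ij}f(x_j).
\]
Next, for each $i$, I would apply the strongly convex Lah-Ribari\v{c} inequality (\ref{LR_str}) to the inner sum, with the weights $a_{ij}$ ($j=1,\ldots,l$), obtaining
\[
\sum_{j=1}^{l}a_{ij}f(x_j)\leq\frac{\beta-\sum_{j=1}^{l}a_{ij}x_j}{\beta-\alpha}f(\alpha)+\frac{\sum_{j=1}^{l}a_{ij}x_j-\alpha}{\beta-\alpha}f(\beta)-c\sum_{j=1}^{l}a_{ij}(\beta-x_j)(x_j-\alpha).
\]

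Finally I would multiply this inequality by $b_i$, sum over $i=1,\ldots,m$, and collapse the resulting double sums by the same identity $\sum_{i=1}^{m}b_i a_{ij}=a_j$, together with $\sum_{i=1}^{m}b_i=B_m$. This turns $\sum_i b_i\sum_j a_{ij}x_j$ into $\sum_j a_j x_j$ and the constant term $\sum_i b_i\cdot\beta$ into $B_m\beta$ (and similarly for $\alpha$), giving exactly the stated bound (\ref{Conv_Sh}). The entire derivation is bookkeeping once the right row-by-row application is identified; the only step that requires any care is ensuring the column-stochastic condition $a_j=\sum_i b_i a_{ij}$ is invoked \emph{after} the Lah-Ribari\v{c} step (not before), since it is precisely what converts the per-row bounds into a bound in terms of the weights $a_j$ of $\mathbf{x}$ rather than the (unused here) intermediate quantities $y_i=\sum_j x_j a_{ij}$.

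The main potential obstacle is merely notational: the condition $(\mathbf{y,b})\prec(\mathbf{x,a})$ delivers both $a_j=\sum_i b_i a_{ij}$ and $y_i=\sum_j x_j a_{ij}$, but only the first of these is actually needed for the converse direction; the $y_i$ and $b_i$ do not appear on the right-hand side of (\ref{Conv_Sh}) except through $B_m$, which is a reassuring sanity check on the final expression.
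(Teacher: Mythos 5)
Your proposal is correct and follows essentially the same route as the paper: rewrite $\sum_j a_j f(x_j)$ via $a_j=\sum_i b_i a_{ij}$, swap the order of summation, apply the strongly convex Lah--Ribari\v{c} inequality (\ref{LR_str}) to each row of the row-stochastic matrix, then collapse the double sums using the same identity and $\sum_i b_i=B_m$. The only (correct) refinement you add is the explicit observation that the relation $y_i=\sum_j x_j a_{ij}$ is never used in this converse direction.
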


\begin{remark}
a)\ Specially, if $m=1$ and $\mathbf{b}=(1),$ then (\ref{Sh_sr1}) and
(\ref{Conv_Sh}) gives the following series of inequalities%
\begin{align*}
f\left(
{\displaystyle\sum\limits_{j=1}^{l}}
a_{j}x_{j}\right)   &  \leq%
{\displaystyle\sum\limits_{j=1}^{l}}
a_{j}f(x_{j})-c%
{\displaystyle\sum\limits_{j=1}^{l}}
a_{j}\left(  x_{j}-\bar{x}\right)  ^{2}\\
&  \leq%
{\displaystyle\sum\limits_{j=1}^{l}}
a_{j}f(x_{j})\\
&  \leq\frac{\beta-\sum_{j=1}^{l}a_{j}x_{j}}{\beta-\alpha}f(\alpha)+\frac
{\sum_{j=1}^{l}a_{j}x_{j}-\alpha}{\beta-\alpha}f(\beta)\\
&  -c%
{\displaystyle\sum\limits_{j=1}^{l}}
a_{j}(\beta-x_{j})(x_{j}-\alpha),
\end{align*}
i.e. we get Jensen's inequality and its conversion for strongly convex
functions.\newline b) If $m=l$ and $\mathbf{b}=\mathbf{e}=(1,...,1),$ then
(\ref{Sh_sr1}) and (\ref{Conv_Sh}) gives
\begin{align*}%
{\displaystyle\sum\limits_{i=1}^{m}}
f(y_{i})  &  \leq%
{\displaystyle\sum\limits_{i=1}^{m}}
f(x_{i})-c\left(
{\displaystyle\sum\limits_{i=1}^{m}}
x_{i}^{2}-%
{\displaystyle\sum\limits_{i=1}^{m}}
y_{i}^{2}\right) \\
&  \leq%
{\displaystyle\sum\limits_{i=1}^{m}}
f(x_{i})\\
&  \leq\frac{\beta-\sum_{j=1}^{l}x_{j}}{\beta-\alpha}f(\alpha)+\frac
{\sum_{j=1}^{l}x_{j}-\alpha}{\beta-\alpha}f(\beta)\\
&  -c%
{\displaystyle\sum\limits_{j=1}^{l}}
(\beta-x_{j})(x_{j}-\alpha),
\end{align*}
i.e. we get majorization inequality and its conversion for strongly convex
functions.\newline c) If $m=l,$ and all weights $b_{i}$ and $a_{j}$ are equal,
then (\ref{Sh_sr1}) and (\ref{Conv_Sh}) gives%
\begin{align*}%
{\displaystyle\sum\limits_{i=1}^{m}}
a_{i}f(y_{i})  &  \leq%
{\displaystyle\sum\limits_{i=1}^{m}}
a_{i}f(x_{i})-c\left(
{\displaystyle\sum\limits_{i=1}^{m}}
a_{i}x_{i}^{2}-%
{\displaystyle\sum\limits_{i=1}^{m}}
a_{i}y_{i}^{2}\right) \\
&  \leq%
{\displaystyle\sum\limits_{i=1}^{m}}
a_{i}f(x_{i})\\
&  \leq\frac{A_{m}\beta-\sum_{i=1}^{m}a_{i}x_{i}}{\beta-\alpha}f(\alpha
)+\frac{\sum_{i=1}^{m}a_{i}x_{i}-A_{m}\alpha}{\beta-\alpha}f(\beta)\\
&  -c%
{\displaystyle\sum\limits_{i=1}^{m}}
a_{i}(\beta-x_{i})(x_{i}-\alpha),
\end{align*}
where $\sum_{i=1}^{m}a_{i}=A_{m},$ i.e. we get Fuchs' inequality and its
conversion for strongly convex functions.
\end{remark}

\section{Applications to $f$-divergences}

Shannon \cite{Shann} introduced a statistical concept of entropy in the theory
of communication and transmission of information, the measure of information
defined by
\begin{equation}
H(\mathbf{\mathbf{p}})=\sum\limits_{i=1}^{n}p_{i}\ln\frac{1}{p_{i}},
\label{Sh}%
\end{equation}
where $\mathbf{p}=(p_{1},...,p_{n})$ is a positive probability distribution ,
i.e. $p_{i}>0,$ $i=1,...,n$, with $%
{\textstyle\sum\nolimits_{i=1}^{n}}
p_{i}=1,$ for some discrete random variable $X.$ It satisfied estimate%
\[
0\leqslant H(\mathbf{\mathbf{p}})\leqslant\ln n.
\]
Shannon's entropy quantifies the unevenness in the probability distribution
$\mathbf{\mathbf{p}}$.

As a slight modification of the previous formula, we get the Kullback-Leibler
divergence \cite{Kull} or relative entropy of $\mathbf{q}$ with respect to
$\mathbf{p}$ defined by
\[
KL(\mathbf{\mathbf{q},\mathbf{p}})=%
{\displaystyle\sum\limits_{i=1}^{n}}
q_{i}\left(  \ln q_{i}-\ln p_{i}\right)  =%
{\displaystyle\sum\limits_{i=1}^{n}}
q_{i}\ln\left(  \frac{q_{i}}{p_{i}}\right)  .
\]
It is a measure of the difference between two positive probability
distributions $\mathbf{\mathbf{q}}$ and $\mathbf{\mathbf{p}}$ over the same
variable. In statistics, it arises as the expected logarithm of difference
between the probability $\mathbf{q}$ of data in the original distribution with
the approximating distribution $\mathbf{p}$. It satisfies the following
estimates%
\[
KL(\mathbf{\mathbf{q},\mathbf{p}})\geq0.
\]

The previous two concepts we can get as special cases of the Csisz\'{a}r
$f$-divergence functional
\begin{equation}
D_{f}(\mathbf{q},\mathbf{p})=%
{\displaystyle\sum\limits_{i=1}^{n}}
p_{i}f\left(  \frac{q_{i}}{p_{i}}\right)  , \label{Cs_f_div1}%
\end{equation}
where $f:(0,\infty)\rightarrow\mathbb{R}$ is a convex function and
$\mathbf{p}=(p_{1},...,p_{n}),$ $\mathbf{q}=(q_{1},...,q_{n})$ with
$p_{i},q_{i}>0,$ $i=1,...,n$ (see \cite{CSIS}, \cite{CK})$.$

Note that%
\begin{align*}
H(\mathbf{p})  &  =-\sum_{i=1}^{n}{p_{i}}\ln p_{i}=-D_{f}(\mathbf{e}%
,\mathbf{p}),\text{ \ \ }f(t)=-\ln t,\\
D_{KL}(\mathbf{q,p})  &  =%
{\displaystyle\sum\limits_{i=1}^{m}}
q_{i}\ln\frac{q_{i}}{p_{i}}=D_{f}(\mathbf{q},\mathbf{p}),\text{ \ \ }f(t)=t\ln
t.
\end{align*}

Csisz\'{a}r with K\"{o}rner \cite{CK} proved Jensen's inequality for the $f$
-divergence functional as follows%
\begin{equation}%
{\displaystyle\sum\limits_{i=1}^{n}}
q_{i}f\left(  \frac{%
{\displaystyle\sum\limits_{i=1}^{n}}
p_{i}}{%
{\displaystyle\sum\limits_{i=1}^{n}}
q_{i}}\right)  \leqslant D_{f}(\mathbf{q},\mathbf{p}). \tag{CK}%
\end{equation}
Specially, if $f$ is normalized, i.e. $f(1)=0$ and $%
{\textstyle\sum\nolimits_{i=1}^{n}}
p_{i}=%
{\textstyle\sum\nolimits_{i=1}^{n}}
q_{i},$ then%
\begin{equation}
0\leq D_{f}(\mathbf{q},\mathbf{p}). \label{CK_l}%
\end{equation}

Csisz\'{a}r $f$-divergence functional (\ref{Cs_f_div1}) is widely employed in
different scientic fields among which we point out mathematical statistics and
specially information theory with deep connections in topics as diverse as
artificial intelligence, statistical physics, and biological evolution. For
suitable choices of the kernel $f,$ the general aspect of the Csisz\'{a}r
$f$-divergence functional (\ref{Cs_f_div1}) can be interpreted as a series of
the well-known divergencies (see \cite{Drag}, \cite{Kap}, \cite{Kap1})$.$ Here
we give some examples:

\begin{itemize}
\item Hellinger divergence
\[
h^{2}(\mathbf{q,p})=\frac{1}{2}\sum_{i=1}^{n}(\sqrt{p_{i}}-\sqrt{q_{i}}%
)^{2},\text{ \ \ }f(t)=\frac{1}{2}\left(  \sqrt{t}-1\right)  ^{2},
\]

\item Variational distance%
\[
V(\mathbf{q,p})=\sum_{i=1}^{n}|p_{i}-q_{i}|,\text{ \ \ }f(t)=|t-1|,
\]

\item Harmonic divergence
\[
D_{H}(\mathbf{q,p})=\sum_{i=1}^{n}\frac{2p_{i}q_{i}}{p_{i}+q_{i}},\text{
\ \ }f(t)=\frac{2t}{1+t},
\]

\item Bhattacharya distance%
\[
D_{B}(\mathbf{q,p})=-D_{f}(\mathbf{q,p})=%
{\displaystyle\sum\limits_{i=1}^{n}}
\sqrt{p_{i}q_{i}},\text{ \ \ }f(t)=-\sqrt{t},
\]

\item Triangular discrimination%
\[
D_{T}(\mathbf{q,p})=\sum_{i=1}^{n}\frac{(p_{i}-q_{i})^{2}}{p_{i}+q_{i}},\text{
\ \ }f(t)=\frac{(t-1)^{2}}{t+1},
\]

\item Chi square distance%
\[
D_{\chi^{2}}(\mathbf{q,p})=%
{\displaystyle\sum\limits_{i=1}^{n}}
\frac{\left(  q_{i}-p_{i}\right)  ^{2}}{p_{i}},\text{ \ \ }f(t)=(t-1)^{2},
\]

\item R\'{e}nyi $\alpha$-order entropy ($\alpha>1$)%
\[
R_{\alpha}(\mathbf{q,p})=%
{\displaystyle\sum\limits_{i=1}^{n}}
q_{i}^{\alpha}p_{i}^{1-\alpha},\ \ f(t)=t^{\alpha}.
\]

\end{itemize}

We extend definition of $f$-divergence functional (\ref{Cs_f_div1}) as follows.

\begin{definition}
Let $f:(0,\infty)\rightarrow\mathbb{R}$ be a strongly convex function with
modulus $c>0$ and $\mathbf{p}=(p_{1},...,p_{n}),$ $\mathbf{q}=(q_{1}%
,...,q_{n})$ with $p_{i},q_{i}>0,$ $i=1,...,n.$ We define%
\begin{equation}
\tilde{D}_{f}(\mathbf{q},\mathbf{p})=%
{\displaystyle\sum\limits_{i=1}^{n}}
p_{i}f\left(  \frac{q_{i}}{p_{i}}\right)  . \label{Cs_f_gen}%
\end{equation}

\end{definition}

In this section our intention is to derive mutual bounds for the generalized
$f$-divergence functional (\ref{Cs_f_gen}). We obtain some reverse relations
for the generalized $f$-divergence functional that correspond to the class of
strongly convex functions.

Through the rest of the paper we always assume that $\alpha,\beta>0.$

\begin{corollary}
Let $\mathbf{p}=(p_{1},...,p_{l})\in\lbrack\alpha,\beta]^{l},$ $\mathbf{q}%
=(q_{1},...,q_{l})\in\lbrack\alpha,\beta]^{l}$ and $\mathbf{R}=(r_{ij}%
)\in\mathcal{M}_{ml}(\mathbb{R})$ be column stochastic matrix. Let us define
$\left\langle \mathbf{p},\mathbf{r}_{i}\right\rangle =%
{\textstyle\sum\nolimits_{j=1}^{l}}
p_{j}r_{ij}>0,$\ $\left\langle \mathbf{q},\mathbf{r}_{i}\right\rangle =%
{\textstyle\sum\nolimits_{j=1}^{l}}
q_{j}r_{ij},$\ $i=1,...,m.$ Then for every $f:[\alpha,\beta]\rightarrow
\mathbb{R}$ strongly convex with modulus $c>0,$ we have%
\begin{align}
\sum_{i=1}^{m}\left\langle \mathbf{p},\mathbf{r}_{i}\right\rangle f\left(
\frac{\left\langle \mathbf{q},\mathbf{r}_{i}\right\rangle }{\left\langle
\mathbf{p},\mathbf{r}_{i}\right\rangle }\right)   &  \leq\tilde{D}%
_{f}(\mathbf{q},\mathbf{p})-c\left(
{\displaystyle\sum\limits_{j=1}^{l}}
\frac{q_{j}}{p_{j}}^{2}-%
{\displaystyle\sum\limits_{i=1}^{m}}
\frac{\left\langle \mathbf{q},\mathbf{r}_{i}\right\rangle }{\left\langle
\mathbf{p},\mathbf{r}_{i}\right\rangle }^{2}\right) \label{Sh_div1}\\
&  \leq\tilde{D}_{f}(\mathbf{q},\mathbf{p})\nonumber\\
&  \leq\frac{\sum\limits_{i=1}^{m}\left\langle \mathbf{p},\mathbf{r}%
_{i}\right\rangle \beta-\sum\limits_{j=1}^{l}q_{j}}{\beta-\alpha}%
f(\alpha)+\frac{\sum\limits_{j=1}^{l}q_{j}-\sum\limits_{i=1}^{m}\left\langle
\mathbf{p},\mathbf{r}_{i}\right\rangle \alpha}{\beta-\alpha}f(\beta
)\nonumber\\
&  -c%
{\displaystyle\sum\limits_{j=1}^{l}}
p_{j}\left(  \beta-\frac{q_{j}}{p_{j}}\right)  \left(  \frac{q_{j}}{p_{j}%
}-\alpha\right)  .\nonumber
\end{align}

\end{corollary}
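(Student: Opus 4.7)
The plan is to recognize the corollary as a direct specialization of Theorem \ref{Th_Sherman_strong} and Theorem \ref{Th_Con_Sherman_strong} via a natural choice of data, weights, and stochastic matrix, so the main content of the argument is constructing the right dictionary rather than proving anything genuinely new.

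First I would set up the identifications
\[
x_{j} := \frac{q_{j}}{p_{j}},\quad a_{j} := p_{j} \quad (j=1,\ldots,l),\qquad y_{i} := \frac{\langle \mathbf{q},\mathbf{r}_{i}\rangle}{\langle \mathbf{p},\mathbf{r}_{i}\rangle},\quad b_{i} := \langle \mathbf{p},\mathbf{r}_{i}\rangle \quad (i=1,\ldots,m),
\]
implicitly assuming $x_{j},y_{i}\in[\alpha,\beta]$ (the domain of $f$). The key guess is the row-stochastic matrix $\mathbf{A}=(a_{ij})$ defined by $a_{ij}:=p_{j}r_{ij}/\langle \mathbf{p},\mathbf{r}_{i}\rangle$.

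Next I would verify $(\mathbf{y},\mathbf{b})\prec(\mathbf{x},\mathbf{a})$ in the sense of (\ref{sh_assump_m}). Row-stochasticity is immediate: $\sum_{j}a_{ij}=\sum_{j}p_{j}r_{ij}/\langle\mathbf{p},\mathbf{r}_{i}\rangle=1$. The identity $y_{i}=\sum_{j}x_{j}a_{ij}$ reduces to $\sum_{j}(q_{j}/p_{j})\cdot p_{j}r_{ij}/\langle\mathbf{p},\mathbf{r}_{i}\rangle = \langle\mathbf{q},\mathbf{r}_{i}\rangle/\langle\mathbf{p},\mathbf{r}_{i}\rangle$, and $a_{j}=\sum_{i}b_{i}a_{ij}$ becomes $p_{j}=p_{j}\sum_{i}r_{ij}$, which is exactly the column-stochasticity of $\mathbf{R}$ (this is the only place that hypothesis is used).

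With the dictionary in place, the first inequality of (\ref{Sh_div1}) is Theorem \ref{Th_Sherman_strong} applied to $f$, once one observes $\sum_{j}a_{j}x_{j}^{2}=\sum_{j}q_{j}^{2}/p_{j}$ and $\sum_{i}b_{i}y_{i}^{2}=\sum_{i}\langle\mathbf{q},\mathbf{r}_{i}\rangle^{2}/\langle\mathbf{p},\mathbf{r}_{i}\rangle$. The middle inequality reduces to nonnegativity of the correction term, which is Sherman's classical inequality (\ref{Sh_ineq}) applied to the convex function $t\mapsto t^{2}$. The final inequality is Theorem \ref{Th_Con_Sherman_strong} with $B_{m}=\sum_{i}\langle\mathbf{p},\mathbf{r}_{i}\rangle$, $\sum_{j}a_{j}x_{j}=\sum_{j}q_{j}$, and $\sum_{j}a_{j}(\beta-x_{j})(x_{j}-\alpha)=\sum_{j}p_{j}(\beta-q_{j}/p_{j})(q_{j}/p_{j}-\alpha)$.

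The only real obstacle is spotting the matrix $\mathbf{A}$; once $a_{ij}=p_{j}r_{ij}/\langle\mathbf{p},\mathbf{r}_{i}\rangle$ is written down, the two compatibility conditions of weighted majorization collapse to pure bookkeeping, and the three inequalities of the corollary are obtained by literal substitution into the two theorems already proved, with no additional estimates required.
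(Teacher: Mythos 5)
Your proposal is correct and follows essentially the same route as the paper: the identical dictionary $x_{j}=q_{j}/p_{j}$, $a_{j}=p_{j}$, $y_{i}=\langle\mathbf{q},\mathbf{r}_{i}\rangle/\langle\mathbf{p},\mathbf{r}_{i}\rangle$, $b_{i}=\langle\mathbf{p},\mathbf{r}_{i}\rangle$, the same row-stochastic matrix $a_{ij}=p_{j}r_{ij}/\langle\mathbf{p},\mathbf{r}_{i}\rangle$ with column-stochasticity of $\mathbf{R}$ giving $\mathbf{a}=\mathbf{bA}$, followed by direct substitution into Theorems \ref{Th_Sherman_strong} and \ref{Th_Con_Sherman_strong}. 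The implicit assumption $q_{j}/p_{j},\,\langle\mathbf{q},\mathbf{r}_{i}\rangle/\langle\mathbf{p},\mathbf{r}_{i}\rangle\in[\alpha,\beta]$ that you flag is likewise left tacit in the paper.
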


\begin{proof}
Let us consider $\mathbf{x}=(x_{1},...,x_{l})$ and $\mathbf{y}=(y_{1}%
,...,y_{m}),$ such that $x_{j}=\frac{q_{j}}{p_{j}},$ $j=1,...,l$ and
$y_{i}=\frac{\left\langle \mathbf{q},\mathbf{r}_{i}\right\rangle
}{\left\langle \mathbf{p},\mathbf{r}_{i}\right\rangle },$ $i=1,...,m.$ Let
$a_{j}=%
{\textstyle\sum\limits_{i=1}^{m}}
b_{i}\frac{p_{j}r_{ij}}{\left\langle \mathbf{p},\mathbf{r}_{i}\right\rangle
},j=1,...,m,$ where $b_{i}=\left\langle \mathbf{p},\mathbf{r}_{i}\right\rangle
,$ $i=1,...,m.$ \newline We have%
\[
\frac{\left\langle \mathbf{q},\mathbf{r}_{i}\right\rangle }{\left\langle
\mathbf{p},\mathbf{r}_{i}\right\rangle }=\frac{%
{\textstyle\sum\limits_{j=1}^{l}}
q_{j}r_{ij}}{%
{\textstyle\sum\limits_{j=1}^{l}}
p_{j}r_{ij}}=\frac{p_{1}r_{i1}}{%
{\textstyle\sum\limits_{j=1}^{l}}
p_{j}r_{ij}}\frac{q_{1}}{p_{1}}+...+\frac{p_{l}r_{il}}{%
{\textstyle\sum\limits_{j=1}^{l}}
p_{j}r_{ij}}\frac{q_{l}}{p_{l}},\text{ \ \ }i=1,...,m.
\]
Moreover, the following identity%
\[
\left(  \frac{\left\langle \mathbf{q},\mathbf{r}_{1}\right\rangle
}{\left\langle \mathbf{p},\mathbf{r}_{1}\right\rangle },...,\frac{\left\langle
\mathbf{q},\mathbf{r}_{m}\right\rangle }{\left\langle \mathbf{p}%
,\mathbf{r}_{m}\right\rangle }\right)  =\left(  \frac{q_{1}}{p_{1}}%
,...,\frac{q_{l}}{p_{l}}\right)  \cdot\left(
\begin{array}
[c]{ccc}%
\frac{p_{1}r_{11}}{\left\langle \mathbf{p},\mathbf{r}_{1}\right\rangle } &
\cdots & \frac{p_{1}r_{m1}}{\left\langle \mathbf{p},\mathbf{r}_{m}%
\right\rangle }\\
\vdots & \ddots & \vdots\\
\frac{p_{l}r_{1l}}{\left\langle \mathbf{p},\mathbf{r}_{1}\right\rangle } &
\cdots & \frac{p_{l}r_{ml}}{\left\langle \mathbf{p},\mathbf{r}_{m}%
\right\rangle }%
\end{array}
\right)
\]
holds for some row stochastic matrix $\mathbf{A}=(a_{ij})\in\mathcal{M}%
_{ml}(\mathbb{R}),$ with $a_{ij}=\frac{p_{j}r_{ij}}{\left\langle
\mathbf{p},\mathbf{r}_{i}\right\rangle },$ $i=1,...,m,$ $j=1,...,l.$
Therefore, $\mathbf{y}=\mathbf{xA}^{T}$ holds$.$ \newline Further, we have
\[
a_{j}=%
{\displaystyle\sum\limits_{i=1}^{m}}
\left\langle \mathbf{p},\mathbf{r}_{i}\right\rangle \frac{p_{j}r_{ij}%
}{\left\langle \mathbf{p},\mathbf{r}_{i}\right\rangle }=p_{j}%
{\displaystyle\sum\limits_{i=1}^{m}}
r_{ij}=p_{j},j=1,...,l,
\]
i.e. $\mathbf{a}=\mathbf{bA}.$ Therefore, the assumptions of Theorem
\ref{Th_Sherman_strong} and Theorem \ref{Th_Con_Sherman_strong} are fulfill.
Now applying (\ref{Sh_str}) and (\ref{Conv_Sh}), we get
\begin{align*}
\sum_{i=1}^{m}\left\langle \mathbf{p},\mathbf{r}_{i}\right\rangle f\left(
\frac{\left\langle \mathbf{q},\mathbf{r}_{i}\right\rangle }{\left\langle
\mathbf{p},\mathbf{r}_{i}\right\rangle }\right)   &  \leq\sum_{j=1}^{l}%
p_{j}f\left(  \frac{q_{j}}{p_{j}}\right)  -c\left(
{\displaystyle\sum\limits_{j=1}^{l}}
\frac{q_{j}}{p_{j}}^{2}-%
{\displaystyle\sum\limits_{i=1}^{m}}
\frac{\left\langle \mathbf{q},\mathbf{r}_{i}\right\rangle }{\left\langle
\mathbf{p},\mathbf{r}_{i}\right\rangle }^{2}\right) \\
&  \leq\sum_{j=1}^{l}p_{j}f\left(  \frac{q_{j}}{p_{j}}\right) \\
&  \leq\frac{\sum\limits_{i=1}^{m}\left\langle \mathbf{p},\mathbf{r}%
_{i}\right\rangle \beta-\sum\limits_{j=1}^{l}q_{j}}{\beta-\alpha}%
f(\alpha)+\frac{\sum\limits_{j=1}^{l}q_{j}-\sum\limits_{i=1}^{m}\left\langle
\mathbf{p},\mathbf{r}_{i}\right\rangle \alpha}{\beta-\alpha}f(\beta)\\
&  -c%
{\displaystyle\sum\limits_{j=1}^{l}}
p_{j}\left(  \beta-\frac{q_{j}}{p_{j}}\right)  \left(  \frac{q_{j}}{p_{j}%
}-\alpha\right)
\end{align*}
what is equivalent to (\ref{Sh_div1}).
\end{proof}

Specially, for $m=1,$ the previous result reduces to the next corollary.

\begin{corollary}
Let $\mathbf{p}=(p_{1},...,p_{l})\in\lbrack\alpha,\beta]^{l},$ $\mathbf{q}%
=(q_{1},...,q_{l})\in\lbrack\alpha,\beta]^{l}$ and and $\mathbf{r}%
=(r_{1},...,r_{l})\in\lbrack\alpha,\beta]^{l}$. Let us define $\left\langle
\mathbf{p},\mathbf{r}\right\rangle =%
{\textstyle\sum\nolimits_{j=1}^{l}}
p_{j}r_{j}>0,$\ $\left\langle \mathbf{q},\mathbf{r}\right\rangle =%
{\textstyle\sum\nolimits_{j=1}^{l}}
q_{j}r_{j}.$ Then for every $f:[\alpha,\beta]\rightarrow\mathbb{R}$ strongly
convex with modulus $c>0,$ we have%
\begin{align*}
\left\langle \mathbf{p},\mathbf{r}\right\rangle f\left(  \frac{\left\langle
\mathbf{q},\mathbf{r}\right\rangle }{\left\langle \mathbf{p},\mathbf{r}%
\right\rangle }\right)   &  \leq\tilde{D}_{f}(\mathbf{q},\mathbf{p})-c\left(
{\displaystyle\sum\limits_{j=1}^{l}}
\frac{q_{j}}{p_{j}}^{2}-\frac{\left\langle \mathbf{q},\mathbf{r}\right\rangle
}{\left\langle \mathbf{p},\mathbf{r}\right\rangle }^{2}\right) \\
&  \leq\tilde{D}_{f}(\mathbf{q},\mathbf{p})\\
&  \leq\frac{\left\langle \mathbf{p},\mathbf{r}\right\rangle \beta
-\sum\limits_{j=1}^{l}q_{j}}{\beta-\alpha}f(\alpha)+\frac{\sum\limits_{j=1}%
^{l}q_{j}-\left\langle \mathbf{p},\mathbf{r}\right\rangle \alpha}{\beta
-\alpha}f(\beta)\\
&  -c%
{\displaystyle\sum\limits_{j=1}^{l}}
p_{j}\left(  \beta-\frac{q_{j}}{p_{j}}\right)  \left(  \frac{q_{j}}{p_{j}%
}-\alpha\right)  .
\end{align*}
If in addition $\mathbf{r}=\mathbf{e}=(1,...,1)$, then%
\begin{align*}%
{\displaystyle\sum\limits_{j=1}^{l}}
p_{j}f\left(  \frac{%
{\displaystyle\sum\limits_{j=1}^{l}}
q_{j}}{%
{\displaystyle\sum\limits_{j=1}^{l}}
p_{j}}\right)   &  \leq\tilde{D}_{f}(\mathbf{q},\mathbf{p})-c\left(
{\displaystyle\sum\limits_{j=1}^{l}}
\frac{q_{j}}{p_{j}}^{2}-\frac{\left(
{\displaystyle\sum\limits_{j=1}^{l}}
q_{j}\right)  }{%
{\displaystyle\sum\limits_{j=1}^{l}}
p_{j}}^{2}\right) \\
&  \leq\tilde{D}_{f}(\mathbf{q},\mathbf{p})\\
&  \leq\frac{%
{\textstyle\sum\limits_{j=1}^{l}}
p_{j}\beta-%
{\textstyle\sum\limits_{j=1}^{l}}
q_{j}}{\beta-\alpha}f(\alpha)+\frac{%
{\textstyle\sum\limits_{j=1}^{l}}
q_{j}-%
{\textstyle\sum\limits_{j=1}^{l}}
p_{j}\alpha}{\beta-\alpha}f(\beta)\\
&  -c%
{\displaystyle\sum\limits_{j=1}^{l}}
p_{j}\left(  \beta-\frac{q_{j}}{p_{j}}\right)  \left(  \frac{q_{j}}{p_{j}%
}-\alpha\right)  .
\end{align*}
Moreover, if $f$ is normalized, i.e. $f(1)=0$ and $%
{\textstyle\sum\nolimits_{j=1}^{l}}
p_{j}=%
{\textstyle\sum\nolimits_{j=1}^{l}}
q_{j},$ we get%
\begin{align*}
0  &  \leq\tilde{D}_{f}(\mathbf{q},\mathbf{p})-c\left(
{\displaystyle\sum\limits_{j=1}^{l}}
\frac{q_{j}}{p_{j}}^{2}-%
{\displaystyle\sum\limits_{j=1}^{l}}
p_{j}\right) \\
&  \leq\tilde{D}_{f}(\mathbf{q},\mathbf{p})\\
&  \leq\frac{%
{\textstyle\sum\limits_{j=1}^{l}}
p_{j}(\beta-1)}{\beta-\alpha}f(\alpha)+\frac{%
{\textstyle\sum\limits_{j=1}^{l}}
p_{j}(1-\alpha)}{\beta-\alpha}f(\beta)-c%
{\displaystyle\sum\limits_{j=1}^{l}}
p_{j}\left(  \beta-\frac{q_{j}}{p_{j}}\right)  \left(  \frac{q_{j}}{p_{j}%
}-\alpha\right)  .
\end{align*}

\end{corollary}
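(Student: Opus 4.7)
The plan is to invoke the preceding corollary in the specialization $m = 1$ and then simplify, so no new analytic input is required. In that case the row stochastic matrix $\mathbf{A}$ reduces to a single row, the weighted majorization is witnessed by the same assignment used in the proof of the previous corollary, namely $x_j = q_j/p_j$, $a_j = p_j$, $y_1 = \langle \mathbf{q},\mathbf{r}\rangle/\langle \mathbf{p},\mathbf{r}\rangle$ and $b_1 = \langle \mathbf{p},\mathbf{r}\rangle$, with the single-row entries $a_{1j} = p_j r_j/\langle \mathbf{p},\mathbf{r}\rangle$ summing to one by the very definition of $\langle \mathbf{p},\mathbf{r}\rangle$. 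Specializing the three-term chain (\ref{Sh_div1}) to $m = 1$ then delivers the first displayed series of inequalities in the statement, since the hypotheses of Theorems \ref{Th_Sherman_strong} and \ref{Th_Con_Sherman_strong} are inherited directly from the $m = 1$ instance.

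To obtain the refinement with $\mathbf{r} = \mathbf{e} = (1,\ldots,1)$, I would substitute $r_j = 1$ into the first chain. The quantities $\langle \mathbf{p},\mathbf{r}\rangle$ and $\langle \mathbf{q},\mathbf{r}\rangle$ collapse to the total masses $\sum_j p_j$ and $\sum_j q_j$, and the remaining summands, including the Lah--Ribari\v{c}-type term on the right and the strong convexity correction, simplify to the claimed form by direct substitution.

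For the final refinement I would impose $f(1) = 0$ together with $\sum_j p_j = \sum_j q_j$. Then $\sum_j q_j/\sum_j p_j = 1$, so the leftmost expression vanishes by $f(1) = 0$; the correction quantity $(\sum_j q_j)^2/(\sum_j p_j)^2$ reduces to $\sum_j p_j$ once the equal-mass constraint is used; and the Lah--Ribari\v{c} prefactors in the converse bound collapse to the stated expressions in $(\beta - 1)$ and $(1 - \alpha)$.

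I do not anticipate a conceptual obstacle: the proof is essentially bookkeeping of the index reduction $m \mapsto 1$ together with careful tracking of the weights $a_j$ and $b_i$ through the preceding corollary. The only place that requires a small amount of attention is the normalized refinement, where several simultaneous simplifications (via $f(1) = 0$ and $\sum p_j = \sum q_j$) must line up to collapse the chain into the compact final form, and I would carry these out one term at a time to avoid an algebraic slip.
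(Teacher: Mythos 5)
Your overall strategy --- specialize the preceding corollary (equivalently, Theorems \ref{Th_Sherman_strong} and \ref{Th_Con_Sherman_strong}) to $m=1$ and then substitute --- is exactly what the paper does; it offers no separate proof beyond the remark that for $m=1$ the previous result ``reduces'' to this one. But your verification of the weighted-majorization hypothesis does not close. With your assignment $b_{1}=\left\langle \mathbf{p},\mathbf{r}\right\rangle$ and $a_{1j}=p_{j}r_{j}/\left\langle \mathbf{p},\mathbf{r}\right\rangle$, the condition $\mathbf{a}=\mathbf{bA}$ from (\ref{sh_assump_m}) forces $a_{j}=b_{1}a_{1j}=p_{j}r_{j}$, \emph{not} $a_{j}=p_{j}$ as you assert. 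In the proof of the preceding corollary the identity $a_{j}=p_{j}$ came from the column stochasticity of $\mathbf{R}$, i.e. $\sum_{i=1}^{m}r_{ij}=1$; when $m=1$ that condition reads $r_{j}=1$ for every $j$, so the only instance your reduction (or the paper's) actually delivers is $\mathbf{r}=\mathbf{e}$. You checked that the row of $\mathbf{A}$ sums to one, but that is the row-stochasticity of $\mathbf{A}$, not the condition $\mathbf{a}=\mathbf{bA}$ --- these are different requirements, and it is the second one that fails for general $\mathbf{r}$.

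The gap is not cosmetic: for general $\mathbf{r}\in[\alpha,\beta]^{l}$ the first displayed chain is false. Take $l=1$, $[\alpha,\beta]=[1,2]$, $p_{1}=q_{1}=1$, $r_{1}=2$, $f(t)=t^{2}$ (strongly convex with modulus $c=1$). Then $\left\langle \mathbf{q},\mathbf{r}\right\rangle /\left\langle \mathbf{p},\mathbf{r}\right\rangle =q_{1}/p_{1}=1$, the correction term in the first line vanishes, and the claimed inequality becomes $2=\left\langle \mathbf{p},\mathbf{r}\right\rangle f(1)\leq p_{1}f(1)=1$, which is absurd. To make the general-$\mathbf{r}$ statement a genuine consequence of (\ref{Sh_str}) and (\ref{Conv_Sh}) you must carry the weights $a_{j}=p_{j}r_{j}$ through, which replaces $\tilde{D}_{f}(\mathbf{q},\mathbf{p})$ by $\sum_{j=1}^{l}p_{j}r_{j}f\left(q_{j}/p_{j}\right)$ and $\sum_{j=1}^{l}q_{j}$ by $\left\langle \mathbf{q},\mathbf{r}\right\rangle$ everywhere in the chain; alternatively, state the first chain only for $\mathbf{r}=\mathbf{e}$. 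Your treatment of the two subsequent specializations ($\mathbf{r}=\mathbf{e}$, and then $f(1)=0$ with $\sum_{j}p_{j}=\sum_{j}q_{j}$) is routine substitution and is fine once the first chain has been repaired.
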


\section{Generalization of Sherman's inequality for strongly $n$-convex
function}

The technique that we use in this section is based on an application of Fink's
identity \cite{FINK}%
\begin{align}
f(x)  &  =\frac{n}{\beta-\alpha}%
{\displaystyle\int\nolimits_{\alpha}^{\beta}}
f(t)dt-\sum_{w=1}^{n-1}\frac{n-w}{w!}\cdot\frac{f^{(w-1)}(\alpha
)(x-\alpha)^{w}-f^{(w-1)}(\beta)(x-\beta)^{w}}{\beta-\alpha}\nonumber\\
&  +\frac{1}{(n-1)!(\beta-\alpha)}%
{\displaystyle\int\nolimits_{\alpha}^{\beta}}
(x-t)^{n-1}k(t,x)f^{(n)}(t)dt, \label{Fink_id0}%
\end{align}
where%
\begin{equation}
k(t,x)=\left\{
\begin{tabular}
[c]{ll}%
$t-\alpha,$ & $\alpha\leq t\leq x\leq\beta$\\
$t-\beta,$ & $\alpha\leq x<t\leq\beta$%
\end{tabular}
\ \ \ \ \ \ \ \ \ \ \ \ \right.  , \label{fink_id_w}%
\end{equation}
which holds for every $f:[\alpha,\beta]\rightarrow\mathbb{R}$ such that
$f^{(n-1)}$ is absolutely continuous for some $n\geq1.$ The sum in
(\ref{Fink_id0}) is zero when $n=1$.

We start with an identity which is very useful for us to obtain generalizations.

\begin{theorem}
\label{Th_Fink1}Let $\mathbf{x}=(x_{1},...,x_{l})\in\lbrack\alpha,\beta]^{l},$
$\mathbf{y}=(y_{1},...,y_{m})\in\lbrack\alpha,\beta]^{m},$ $\mathbf{a}%
=(a_{1},...,a_{l})\in\lbrack0,\infty)^{l}$ and $\mathbf{b}=(b_{1}%
,...,b_{m})\in\lbrack0,\infty)^{m}$ be such that $(\mathbf{y,b})\prec
(\mathbf{x,a}).$ Let $k(t,\cdot)$ be defined as in (\ref{fink_id_w}). Then for
every $f:[\alpha,\beta]\rightarrow\mathbb{R}$, such that $f^{(n-1)}$ is
absolutely continuous on $[\alpha,\beta],$ we have%
\begin{align}
&
{\displaystyle\sum\limits_{j=1}^{l}}
a_{j}f(x_{j})-%
{\displaystyle\sum\limits_{i=1}^{m}}
b_{i}f(y_{i})\label{Lm1}\\
&  =\frac{1}{\beta-\alpha}\sum_{w=2}^{n-1}\frac{n-w}{w!}\cdot f^{(w-1)}%
(\beta)\left(
{\displaystyle\sum\limits_{j=1}^{l}}
a_{j}(x_{j}-\beta)^{w}-%
{\displaystyle\sum\limits_{i=1}^{m}}
b_{i}(y_{i}-\beta)^{w}\right) \nonumber\\
&  -\frac{1}{\beta-\alpha}\sum_{w=2}^{n-1}\frac{n-w}{w!}\cdot f^{(w-1)}%
(\alpha)\left(
{\displaystyle\sum\limits_{j=1}^{l}}
a_{j}(x_{j}-\alpha)^{w}-%
{\displaystyle\sum\limits_{i=1}^{m}}
b_{i}(y_{i}-\alpha)^{w}\right) \nonumber\\
&  +\frac{1}{(n-1)!(\beta-\alpha)}\times\nonumber\\
&
{\displaystyle\int\nolimits_{\alpha}^{\beta}}
\left[
{\displaystyle\sum\limits_{j=1}^{l}}
a_{j}(x_{j}-t)^{n-1}k(t,x_{j})-%
{\displaystyle\sum\limits_{i=1}^{m}}
b_{i}(y_{i}-t)^{n-1}k(t,y_{i})\right]  f^{(n)}(t)dt.\nonumber
\end{align}

\begin{proof}
Applying (\ref{Fink_id0}) to the Sherman difference $%
{\textstyle\sum\nolimits_{j=1}^{l}}
a_{j}f(x_{j})-%
{\textstyle\sum\nolimits_{i=1}^{m}}
b_{i}f(y_{i}),$ we get (\ref{Lm1}).
\end{proof}
\end{theorem}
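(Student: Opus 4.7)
The plan is simply to apply Fink's identity \eqref{Fink_id0} to each point evaluation $f(x_j)$ and $f(y_i)$, take the weighted difference, and observe that the lowest-order Fink terms collapse.

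First I would record two scalar identities that the hypothesis $(\mathbf{y,b})\prec(\mathbf{x,a})$ delivers. From $a_j=\sum_i b_i a_{ij}$ together with the row-stochasticity $\sum_j a_{ij}=1$ one gets
\[
\sum_{j=1}^{l} a_j \;=\; \sum_{i=1}^{m} b_i,
\]
and inserting $y_i=\sum_j x_j a_{ij}$ and swapping the order of summation yields
\[
\sum_{i=1}^{m} b_i y_i \;=\; \sum_{j=1}^{l} a_j x_j.
\]

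Next I would substitute $x=x_j$ in \eqref{Fink_id0}, multiply by $a_j$, sum over $j$, do the analogous computation with $y_i$ and $b_i$, and subtract. Three groups of terms appear. The constant contribution $\tfrac{n}{\beta-\alpha}\int_\alpha^\beta f(t)\,dt$ is multiplied by $\sum a_j-\sum b_i=0$ and so disappears. The $w=1$ term from the Fink sum reduces to a linear combination of the quantities $\sum_j a_j(x_j-\gamma) - \sum_i b_i(y_i-\gamma)$ for $\gamma\in\{\alpha,\beta\}$; each such combination equals $(\sum a_j x_j - \sum b_i y_i)-\gamma(\sum a_j - \sum b_i)=0$ by the two identities above. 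Thus only indices $w=2,\ldots,n-1$ survive, producing, after distributing the minus sign in front of Fink's sum, the two explicit block sums on the right-hand side of \eqref{Lm1}.

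Finally, the Fink remainder integrals combine into a single one. Interchanging the finite sums $\sum_j$ and $\sum_i$ with $\int_\alpha^\beta$ is legitimate because $f^{(n-1)}$ is absolutely continuous (so $f^{(n)}\in L^1[\alpha,\beta]$) and both sums are finite; this yields the integral term in \eqref{Lm1}. The entire argument is algebraic bookkeeping: the one conceptual step, the $w=1$ cancellation, is a direct consequence of the two identities of weighted majorization, so I anticipate no genuine obstacle beyond careful sign tracking.
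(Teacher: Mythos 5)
Your proposal is correct and is exactly the paper's (one-line) argument carried out in detail: apply Fink's identity to the Sherman difference and note that the constant term and the $w=1$ term vanish because weighted majorization gives $\sum_j a_j=\sum_i b_i$ and $\sum_j a_jx_j=\sum_i b_iy_i$. The cancellation you spell out is precisely what the paper leaves implicit in restricting the sum to $w=2,\dots,n-1$, so there is nothing further to add.
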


\begin{theorem}
\label{Th_Fink2a}Let all the assumptions of Theorem \ref{Th_Fink1} be
satisfied. Additionally, let $f$ be $n$-strongly convex with modulus $c>0.$ If%
\begin{equation}%
{\displaystyle\sum\limits_{j=1}^{l}}
a_{j}(x_{j}-t)^{n-1}k(t,x_{j})-%
{\displaystyle\sum\limits_{i=1}^{m}}
b_{i}(y_{i}-t)^{n-1}k(t,y_{i})\geq0,\text{ \ }\alpha\leq t\leq\beta,
\label{Th_Fink2_assump}%
\end{equation}
then%
\begin{align}
&
{\displaystyle\sum\limits_{j=1}^{l}}
a_{j}f(x_{j})-%
{\displaystyle\sum\limits_{i=1}^{m}}
b_{i}f(y_{i})-c\left(
{\displaystyle\sum\limits_{j=1}^{l}}
a_{j}x_{j}^{n}-%
{\displaystyle\sum\limits_{i=1}^{m}}
b_{i}y_{i}^{n}\right) \label{Th_Fink2a_rez}\\
&  \geq\frac{1}{\beta-\alpha}\sum_{w=1}^{n-1}\frac{n-w}{w!}\nonumber\\
&  \cdot\left[  f^{(w-1)}(\beta)-cn(n-1)...(n-w+2)\beta^{n-w+2}\right]
\left(
{\displaystyle\sum\limits_{j=1}^{l}}
a_{j}(x_{j}-\beta)^{w}-%
{\displaystyle\sum\limits_{i=1}^{m}}
b_{i}(y_{i}-\beta)^{w}\right) \nonumber\\
&  -\frac{1}{\beta-\alpha}\sum_{w=1}^{n-1}\frac{n-w}{w!}\nonumber\\
&  \cdot\left[  f^{(w-1)}(\alpha)-cn(n-1)...(n-w+2)\alpha^{n-w+2}\right]
\left(
{\displaystyle\sum\limits_{j=1}^{l}}
a_{j}(x_{j}-\alpha)^{w}-%
{\displaystyle\sum\limits_{i=1}^{m}}
b_{i}(y_{i}-\alpha)^{w}\right)  .\nonumber
\end{align}
If the reverse inequality in (\ref{Th_Fink2_assump}) holds, then the reverse
inequality in (\ref{Th_Fink2a_rez}) holds.

\begin{proof}
Let us consider the function $g(x)=f(x)-cx^{n}.$ Since $f$ is strongly
$n$-convex with modulus $c,$ then $g$ is $n$-convex. We may assume without
loss of generality that $f$ and $g$ are $n$-times differentiable and
$g^{(n)}\geq0$ on $[\alpha,\beta]$ (see \cite[p. 16]{PPT})$.$ \newline
Applying (\ref{Th_Fink2a_rez}) to $g,$ we have%
\begin{align}
&
{\displaystyle\sum\limits_{j=1}^{l}}
a_{j}g(x_{j})-%
{\displaystyle\sum\limits_{i=1}^{m}}
b_{i}g(y_{i})\label{rez0}\\
&  =\frac{1}{\beta-\alpha}\sum_{w=2}^{n-1}\frac{n-w}{w!}\cdot g^{(w-1)}%
(\beta)\left(
{\displaystyle\sum\limits_{j=1}^{l}}
a_{j}(x_{j}-\beta)^{w}-%
{\displaystyle\sum\limits_{i=1}^{m}}
b_{i}(y_{i}-\beta)^{w}\right) \nonumber\\
&  -\frac{1}{\beta-\alpha}\sum_{w=2}^{n-1}\frac{n-w}{w!}\cdot g^{(w-1)}%
(\alpha)\left(
{\displaystyle\sum\limits_{j=1}^{l}}
a_{j}(x_{j}-\alpha)^{w}-%
{\displaystyle\sum\limits_{i=1}^{m}}
b_{i}(y_{i}-\alpha)^{w}\right) \nonumber\\
&  +\frac{1}{(n-1)!(\beta-\alpha)}\times\nonumber\\
&
{\displaystyle\int\nolimits_{\alpha}^{\beta}}
\left[
{\displaystyle\sum\limits_{j=1}^{l}}
a_{j}(x_{j}-t)^{n-1}k(t,x_{j})-%
{\displaystyle\sum\limits_{i=1}^{m}}
b_{i}(y_{i}-t)^{n-1}k(t,y_{i})\right]  g^{(n)}(t)dt.\nonumber
\end{align}
Moreover, if (\ref{Th_Fink2_assump}) holds, then%
\begin{align}
&
{\displaystyle\sum\limits_{j=1}^{l}}
a_{j}g(x_{j})-%
{\displaystyle\sum\limits_{i=1}^{m}}
b_{i}g(y_{i})\label{rez0b}\\
&  \geq\frac{1}{\beta-\alpha}\sum_{w=2}^{n-1}\frac{n-w}{w!}\cdot
g^{(w-1)}(\beta)\left(
{\displaystyle\sum\limits_{j=1}^{l}}
a_{j}(x_{j}-\beta)^{w}-%
{\displaystyle\sum\limits_{i=1}^{m}}
b_{i}(y_{i}-\beta)^{w}\right) \nonumber\\
&  -\frac{1}{\beta-\alpha}\sum_{w=2}^{n-1}\frac{n-w}{w!}\cdot g^{(w-1)}%
(\alpha)\left(
{\displaystyle\sum\limits_{j=1}^{l}}
a_{j}(x_{j}-\alpha)^{w}-%
{\displaystyle\sum\limits_{i=1}^{m}}
b_{i}(y_{i}-\alpha)^{w}\right) \nonumber
\end{align}
which is equivalent to (\ref{Th_Fink2a_rez}).\newline If the reverse
inequality in (\ref{Th_Fink2_assump}) holds, then the last term in
(\ref{rez0}) is nonpositive and then the reverse inequality in (\ref{rez0b})
holds. This ends the proof.
\end{proof}
\end{theorem}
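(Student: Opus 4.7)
The plan is to reduce the strongly $n$-convex case to the ordinary $n$-convex case via the substitution $g(x)=f(x)-cx^{n}$. By the characterization of strong $n$-convexity recalled in Remark 3, $g$ is $n$-convex; after a standard mollification argument (see \cite[p.~16]{PPT}) we may further assume $g\in C^{n}[\alpha,\beta]$ with $g^{(n)}\geq 0$ on $[\alpha,\beta]$. Because $f^{(n-1)}$ is absolutely continuous, so is $g^{(n-1)}$, and Theorem \ref{Th_Fink1} applies verbatim to $g$.

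First I substitute $g$ for $f$ in the identity (\ref{Lm1}). This expresses the Sherman difference
\[
\sum_{j=1}^{l} a_{j}g(x_{j})-\sum_{i=1}^{m} b_{i}g(y_{i})
\]
as a linear combination of the boundary values $g^{(w-1)}(\alpha)$ and $g^{(w-1)}(\beta)$ (weighted by the moment differences $\sum a_{j}(x_{j}-\alpha)^{w}-\sum b_{i}(y_{i}-\alpha)^{w}$ and similarly at $\beta$) plus the integral
\[
\frac{1}{(n-1)!(\beta-\alpha)}\int_{\alpha}^{\beta}K(t)\,g^{(n)}(t)\,dt,
\]
where $K(t)$ denotes the kernel appearing on the left of (\ref{Th_Fink2_assump}). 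Under hypothesis (\ref{Th_Fink2_assump}) we have $K\geq 0$ pointwise, while $g^{(n)}\geq 0$; hence the integrand is nonnegative and the integral can be discarded to obtain
\[
\sum a_{j}g(x_{j})-\sum b_{i}g(y_{i})\;\geq\;\text{(boundary sum in }g\text{)}.
\]

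It remains to translate this back into a statement about $f$. On the left-hand side, $\sum a_{j}g(x_{j})-\sum b_{i}g(y_{i})$ equals $\sum a_{j}f(x_{j})-\sum b_{i}f(y_{i})-c\bigl(\sum a_{j}x_{j}^{n}-\sum b_{i}y_{i}^{n}\bigr)$, which is exactly the quantity on the left-hand side of (\ref{Th_Fink2a_rez}). For each boundary contribution I use the differentiation rule
\[
g^{(w-1)}(x)=f^{(w-1)}(x)-c\,n(n-1)\cdots(n-w+2)\,x^{n-w+1}
\]
(with the convention that the empty product equals $1$ when $w=1$), and collect terms so that the coefficients of $\sum a_{j}(x_{j}-\alpha)^{w}-\sum b_{i}(y_{i}-\alpha)^{w}$ and of the analogous expression at $\beta$ match those displayed in (\ref{Th_Fink2a_rez}). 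The reverse statement is handled symmetrically: if $K\leq 0$ pointwise, the integrand is nonpositive, so discarding the integral flips the inequality throughout.

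The only genuinely delicate point is the regularization: an $n$-convex $g$ need not be $n$-times differentiable, so strictly speaking one mollifies $g$ by convolution with a smooth bump, applies identity (\ref{Lm1}) and the nonnegativity argument to the smoothed function, and passes to the limit, using the absolute continuity of $g^{(n-1)}$ to preserve the integral remainder in the limit. Beyond this standard device, the argument is a direct bookkeeping of the polynomial derivatives $(cx^{n})^{(w-1)}$ and the cancellation they produce in the boundary sums of (\ref{Lm1}).
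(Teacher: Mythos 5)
Your proposal is correct and follows essentially the same route as the paper's own proof: pass to $g(x)=f(x)-cx^{n}$, invoke the regularization to assume $g^{(n)}\geq0$, apply the identity of Theorem \ref{Th_Fink1} to $g$, and discard the nonnegative integral term under hypothesis (\ref{Th_Fink2_assump}) (reversing signs for the reverse case). Your explicit formula $g^{(w-1)}(x)=f^{(w-1)}(x)-c\,n(n-1)\cdots(n-w+2)\,x^{n-w+1}$ is the correct one and in fact reveals that the exponent $n-w+2$ displayed in (\ref{Th_Fink2a_rez}) should read $n-w+1$.
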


\begin{remark}
Consider the function $s:[\alpha,\beta]\rightarrow\mathbb{R}$ defined by%
\[
s(x)=(x-t)^{n-1}k(t,x)=\left\{
\begin{tabular}
[c]{ll}%
$(x-t)^{n-1}(t-\alpha),$ & $\alpha\leq t\leq x\leq\beta$\\
$(x-t)^{n-1}(t-\beta),$ & $\alpha\leq x<t\leq\beta$%
\end{tabular}
\ \ \ \ \right.  .
\]
We have%
\[
s^{\prime\prime}(x)=\left\{
\begin{tabular}
[c]{ll}%
$(n-1)(n-2)(x-t)^{n-3}(t-\alpha),$ & $\alpha\leq t\leq x\leq\beta$\\
$(n-1)(n-2)(x-t)^{n-3}(t-\beta),$ & $\alpha\leq x<t\leq\beta$%
\end{tabular}
\ \ \ \ \ \ \ \right.  .
\]
Then for even $n,$ the function $s$ is convex and by Sherman's theorem, we
have%
\[%
{\displaystyle\sum\limits_{j=1}^{l}}
a_{j}(x_{j}-t)^{n-1}k(t,x_{j})-%
{\displaystyle\sum\limits_{i=1}^{m}}
b_{i}(y_{i}-t)^{n-1}k(t,y_{i})\geq0,
\]
i.e. the assumption (\ref{Th_Fink2_assump}) is immediately satisfied.
Therefore, by Theorem \ref{Th_Fink2a}, the inequality (\ref{Th_Fink2a_rez})
holds. \newline Specially, for $n=2,$ the inequality (\ref{Th_Fink2a_rez})
reduces to (\ref{Sh_str}).
\end{remark}

\end{document}